\theoremstyle{definition}
\numberwithin{equation}{section}
\newcommand\vanish[1]{}	
\newcommand\ourcomment[1]{ \textbf{[#1]} }
\newcommand\oc\ourcomment
\newtheorem{theorem}{Theorem}[section]
\newtheorem{lemma}{Lemma}[section]
\newtheorem{corollary}{Corollary}[section]
\title{Perfect divisibility and $2$-divisibility}
\author{Maria Chudnovsky
\thanks{Partially supported by NSF grant DMS-1550991 and by US Army Research 
Office grant W911NF-16-1-0404.}\\
Princeton University, Princeton, NJ 08544, USA
\and
Vaidy Sivaraman\\
Binghamton University, Binghamton, NY 13902, USA
}
\begin{document}
\maketitle
\begin{abstract}
A graph $G$ is said to be $2$-divisible if for all (nonempty) induced subgraphs $H$ of $G$, $V(H)$ can be partitioned into two sets $A,B$ such that $\omega(A) < \omega(H)$  and $\omega(B) < \omega(H)$. A graph $G$ is said to be perfectly divisible if for all induced subgraphs $H$ of $G$, $V(H)$ can be partitioned into two sets $A,B$ such that $H[A]$ is perfect and $\omega(B) < \omega(H)$. We prove that if a graph is $(P_5,C_5)$-free, then it is $2$-divisible. We also prove that if a graph is bull-free and either odd-hole-free or $P_5$-free, then it is perfectly divisible. \\
\end{abstract}

\maketitle

\section{Introduction}
All graphs considered in this article are finite and simple.  Let $G$ be a 
graph. The complement $G^c$ of $G$ is the graph with vertex set $V(G)$ and 
such that two vertices are adjacent in $G^c$ if and only if they are 
non-adjacent in $G$. 
For two graphs $H$ and $G$, $H$ is an {\em induced subgraph} of $G$ if 
$V(H) \subseteq V(G)$, and a pair of vertices $u,v \in V(H)$ is adjacent if and only if it is adjacent in $G$. We say that $G$ {\em contains} $H$ if $G$ has an induced subgraph isomorphic to $H$. If $G$ does not contain $H$, we say that $G$
is {\em $H$-free}. For a set $X \subseteq V(G)$ we denote by 
$G[X]$ the induced subgraph of $G$ with vertex set $X$.
For an integer $k>0$, we denote by $P_k$ the path on $k$ vertices, and
by $C_k$ the cycle on $k$ vertices. 
A {\em path in a graph} is a sequence $p_1-\ldots-p_k$ (with $k \geq 1$) of distinct vertices such that $p_i$ is adjacent to $p_j$ if and only if $|i-j|=1$. 
Sometimes we say that $p_1- \ldots -p_k$ {\em is a $P_k$}.
A {\em hole} in a graph is an induced subgraph that is isomorphic to  the cycle $C_k$ with $k\geq 4$, and $k$ is the {\em length} of the hole. A hole is {\em odd} if $k$ is odd, and {\em even} otherwise. The vertices of a hole can be 
numbered $c_1, \ldots, c_k$ so that $c_i$ is adjacent to $c_j$ if and only if
$|i-j| \in \{1,k-1\}$; sometimes we write $C=c_1-\ldots-c_k-c_1$. 
An {\em antihole} in a  graph is an induced subgraph that is isomorphic to  
$C_k^c$ with $k\geq 4$, and again $k$ is the {\em length} of the antihole. 
Similarly, an  antihole is {\em odd} if $k$ is odd, and {\em even} otherwise.
The {\em bull} is the graph consisting of a triangle with two disjoint pendant edges. A graph is {\em bull-free} if no induced subgraph of it is isomorphic to the bull.  The chromatic number of a graph $G$ is denoted by $\chi(G)$ and the clique number by $\omega(G)$. A graph $G$ is called {\em perfect} if for every 
induced subgraph $H$ of $G$, $\chi(H) = \omega (H)$.  
 For a set $X$ of vertices, we will usually write $\chi(X)$ instead of $\chi(G[X])$, and $\omega(X)$ instead of $\omega(G[X])$.
 If $X$ is a set of vertices and $x$ is a vertex, we will write $X + x$ for $X \cup \{x\}$.  \\


A graph $G$ is said to be {\em $2$-divisible} if for all (nonempty) induced subgraphs $H$ of $G$, $V(H)$ can be partitioned into two sets $A,B$ such that $\omega(A) < \omega(H)$  and $\omega(B) < \omega(H)$.  Ho\`{a}ng and McDiarmid \cite{HM} defined the notion of $2$-divisibility.  They actually conjecture that a graph is $2$-divisible if and only if it is odd-hole-free. A graph is said to be {\em perfectly divisible} if for all induced subgraphs $H$ of $G$, $V(H)$ can be partitioned into two sets $A,B$ such that $H[A]$ is perfect and $\omega(B) < \omega(H)$. Ho\`{a}ng \cite{CTH} introduced the notion of perfect divisibility and proved (\cite{CTH}) that (banner, odd hole)-free graphs are perfectly divisible. A nice feature of proving that a graph is perfectly divisible is that we get a quadratic upper bound for the chromatic number in terms of the clique number.  More precisely:

\begin{lemma}\label{QUADRATICCHIBOUND}
Let $G$ be a perfectly divisible graph. Then $\chi(G) \leq {\omega(G) + 1 \choose 2}$.
\end{lemma}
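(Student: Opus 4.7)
The plan is to prove the bound by induction on $\omega(G)$.

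For the base case $\omega(G)=1$, the graph $G$ has no edges, so $\chi(G)=1=\binom{2}{2}$, matching the claimed bound. For the inductive step, I would first observe that the class of perfectly divisible graphs is hereditary: the definition quantifies over all induced subgraphs $H$ of $G$, so every induced subgraph of a perfectly divisible graph is itself perfectly divisible. This lets me apply the induction hypothesis to any induced subgraph whose clique number is strictly smaller.

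Now, applying perfect divisibility to $G$ itself, I get a partition $V(G)=A\cup B$ with $G[A]$ perfect and $\omega(B)<\omega(G)$, i.e., $\omega(B)\leq \omega(G)-1$. Since $G[A]$ is perfect, $\chi(A)=\omega(A)\leq \omega(G)$. Since $G[B]$ is perfectly divisible (by heredity) and has clique number at most $\omega(G)-1$, the induction hypothesis gives
\[
\chi(B)\leq \binom{\omega(B)+1}{2}\leq \binom{\omega(G)}{2}.
\]
Taking the union of proper colorings of $G[A]$ and $G[B]$ with disjoint color sets yields
\[
\chi(G)\leq \chi(A)+\chi(B)\leq \omega(G)+\binom{\omega(G)}{2}=\binom{\omega(G)+1}{2},
\]
using the Pascal identity. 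This closes the induction.

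There is no real obstacle here; the only subtle point to state carefully is the heredity of perfect divisibility, which justifies applying the inductive hypothesis to $G[B]$. Everything else is the standard ``color the perfect part with $\omega$ colors and recurse on the rest'' argument, combined with the identity $\binom{n}{2}+n=\binom{n+1}{2}$.
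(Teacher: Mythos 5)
Your proof is correct and follows essentially the same route as the paper's: apply perfect divisibility to get a partition into a perfect part $A$ (colored with at most $\omega(G)$ colors) and a part $B$ with smaller clique number, then recurse on $B$ and use $\binom{\omega}{2}+\omega=\binom{\omega+1}{2}$. Your explicit remarks on the base case and on heredity of perfect divisibility are just a more careful spelling-out of the same induction.
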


\begin{proof}
Induction on $\omega(G)$. Let $\omega(G) = \omega$. Let $X \subseteq V(G)$ such that $G[X]$ is perfect and $\chi(G \setminus X) < \omega$. Since $G \setminus X$ is perfectly divisible, $\chi(G \setminus X) \leq  {\omega  \choose 2}$. Since $G[X]$ is perfect, $\chi(X) \leq \omega$. Consequently, $\chi(G) \leq \chi(G \setminus X) + \chi(X) \leq \omega +  {\omega  \choose 2} =  {\omega + 1 \choose 2}$. 
\end{proof}

Analogously, $2$-divisibility gives an exponential $\chi$-bounding function. 

\begin{lemma}\label{POWEROFTWOCHIBOUND}
Let $G$ be a $2$-divisible graph. Then $\chi(G) \leq 2^{\omega(G) - 1}$.
\end{lemma}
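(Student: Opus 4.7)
The plan is to mimic the proof of Lemma \ref{QUADRATICCHIBOUND} almost verbatim, using induction on $\omega(G)$. The base case is $\omega(G) = 1$, where $G$ is edgeless and $\chi(G) = 1 = 2^0$, as required.

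For the inductive step, suppose the bound holds for all $2$-divisible graphs of clique number less than $\omega$, and let $G$ be $2$-divisible with $\omega(G) = \omega \geq 2$. Applying the definition of $2$-divisibility to $H = G$ itself, I obtain a partition $V(G) = A \cup B$ with $\omega(A) < \omega$ and $\omega(B) < \omega$.

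The next observation is that the class of $2$-divisible graphs is hereditary (this is immediate from the fact that the definition quantifies over all induced subgraphs), so both $G[A]$ and $G[B]$ are themselves $2$-divisible. By the inductive hypothesis, $\chi(A) \leq 2^{\omega(A) - 1} \leq 2^{\omega - 2}$ and likewise $\chi(B) \leq 2^{\omega - 2}$. Combining a proper coloring of $G[A]$ with a disjoint proper coloring of $G[B]$ yields $\chi(G) \leq \chi(A) + \chi(B) \leq 2 \cdot 2^{\omega - 2} = 2^{\omega - 1}$.

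There is no real obstacle here; the only thing to verify carefully is that $2$-divisibility passes to induced subgraphs, which is automatic from the definition. The proof is essentially the $2$-divisible analogue of the halving argument used for Lemma \ref{QUADRATICCHIBOUND}, except that both halves contribute a factor of $2^{\omega - 2}$ rather than one contributing $\omega$ and the other $\binom{\omega}{2}$.
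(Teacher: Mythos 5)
Your proof is correct and follows essentially the same argument as the paper: induction on $\omega(G)$, partitioning $V(G)$ into $A,B$ with both clique numbers smaller, and summing the two chromatic numbers. Your explicit remarks on the base case and on heredity of $2$-divisibility are details the paper leaves implicit, but the approach is identical.
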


\begin{proof}
Induction on $\omega(G)$. Let $\omega(G) = \omega$. Let $(A,B)$ be a partition of $V(G)$ such that $\omega(A) < \omega$ and $\omega(B) < \omega$. Now $\chi(A) \leq 2^{\omega - 2}$ and $\chi(B) \leq 2^{\omega - 2}$. Consequently, $\chi(G) \leq \chi(A) + \chi(B) \leq  2^{\omega - 2} +  2^{\omega - 2} =  2^{\omega - 1}$. 
\end{proof}

We end the introduction by setting up the notation that we will be using. For a vertex $v$ of a graph $G$, $N(v)$ will denote the set of neighbors of $v$
(we write $N_G(v)$ if there is a risk of confusion). The closed neighborhood of $v$, denoted $N[v]$, is defined to be $N(v) + v$. We define $M(v)$ (or $M_G(v)$)to be $V(G) \setminus N[v]$.  Let $X$ and $Y$ be disjoint subsets of $V(G)$. We say $X$ is complete to $Y$ if every vertex in $X$ is adjacent to every vertex in $Y$. We say $X$ is anticomplete to $Y$ if every vertex in $X$ is non-adjacent to every vertex in $Y$.  A set $X \subseteq V(G)$ is a {\em homogeneous set} if $1<|X|<|V(G)|$ and every vertex of $V(G) \setminus X$ is either complete or anticomplete to $X$.
If $G$ contains a homogeneous set, we say that $G$ {\em admits a homogeneous set decomposition}. \\

This paper is organized as follows. In section~2 we prove that if a graph contains neither a $P_5$ nor a $C_5$, then it is $2$-divisible.  In Section~3 we prove that if a graph is bull-free and either odd-hole-free or $P_5$-free, then it is perfectly divisible.

\section{$(P_5, C_5)$-free graphs are $2$-divisible}

We start with some definitions. Let $G$ be a graph.  $X \subseteq V(G)$ is said to be {\em connected} if $G[X]$ is connected, and {\em anticonnected} if $G^c[X]$ is connected. For  $X \subseteq V(G)$, a {\em component}  of $X$ is a maximal connected subset of $X$, and an {\em anticomponent} of $X$ is a maximal anticonnected   subset of $X$.

The following lemma is used several times in the sequel. 

\begin{lemma}\label{SEAGULLLEMMA}
Let $G$ be a graph. Let $C \subseteq V(G)$ be connected, and let $v \in V(G) \setminus C$ 
such that $v$ is neither complete nor anticomplete to $C$. Then there exist 
$a,b \in C$ such that $v - a - b$ is a path.  
\end{lemma}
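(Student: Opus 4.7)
The plan is to build the desired induced $P_3$ by walking along a path inside $G[C]$ and looking for the first adjacency/non-adjacency transition relative to $v$.

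First I would unpack the conclusion: producing $a,b \in C$ with $v\d a \d b$ a path means finding $a,b$ such that $a \in N(v) \cap C$, $b \in C \setminus N(v)$, and $ab \in E(G)$. The hypothesis that $v$ is neither complete nor anticomplete to $C$ gives immediately some $a' \in C$ with $a' \in N(v)$ and some $b' \in C$ with $b' \notin N(v)$. Since $C$ is connected, $G[C]$ contains a path $P = p_1\d p_2\c p_k$ with $p_1 = a'$ and $p_k = b'$.

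Next I would scan this path and let $i$ be the largest index such that $p_i \in N(v)$; such $i$ exists since $p_1 \in N(v)$, and $i < k$ since $p_k \notin N(v)$. Setting $a := p_i$ and $b := p_{i+1}$, I have $a \in N(v)$ by the choice of $i$, $b \notin N(v)$ by maximality of $i$, and $ab \in E(G)$ because $p_i p_{i+1}$ is an edge of $P$. Hence $v\d a \d b$ is an induced path, as required.

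There is no real obstacle here: the argument is just a standard walk along a connected subgraph to detect a transition. The only mild subtlety is remembering that ``$v\d a \d b$ is a path'' in the paper's convention forces $vb \notin E(G)$, which is exactly what maximality of $i$ delivers. $\bbox$
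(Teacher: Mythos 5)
Your proof is correct and is essentially the paper's argument: both reduce to finding an edge between $N(v)\cap C$ and $M(v)\cap C$ using connectivity of $C$, with your path-scanning step simply making explicit the standard fact the paper invokes directly. No issues.
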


\begin{proof}
Since $v$ is neither complete nor anticomplete to $C$, it follows that both the
sets $N(v) \cap C$ and $M(v) \cap C$ are non-empty. Since $C$ is connected, 
there exist $a \in N(v) \cap C$ and $b \in M(v) \cap C$ such that 
$ab \in E(G)$.  But now $v-a-b$ is the desired path. This completes the proof.
\end{proof}

We are ready to prove the main result of this section.

\begin{theorem} Every $(P_5, C_5)$-free graph is $2$-divisible.  \label{P5C5THEOREM}
\end{theorem}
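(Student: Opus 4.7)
My plan is to prove Theorem~\ref{P5C5THEOREM} by induction on $|V(G)|$. Since the class of $(P_5,C_5)$-free graphs is closed under induced subgraphs, it suffices to produce a single partition $V(G)=A\cup B$ with $\omega(A),\omega(B)<\omega(G)$ in the case $\omega(G)\ge 2$; the corresponding partitions for induced subgraphs are then supplied by the inductive hypothesis.

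First I would dispose of the easy structural reductions. If $G$ is disconnected, apply induction to each component individually and take the union, placing any component of strictly smaller clique number entirely on the $A$-side. If $G^c$ is disconnected, so that $V(G)=V_1\cup V_2$ with $V_1$ complete to $V_2$, then $\omega(G)=\omega(V_1)+\omega(V_2)$ with both $V_i$ nonempty, and the partition $A=V_1$, $B=V_2$ trivially satisfies the requirement. If $G$ admits a proper homogeneous set $X$, both $G[X]$ and the quotient graph $G/X$ are strictly smaller and still $(P_5,C_5)$-free, so a careful splicing of the inductive partitions of $G[X]$ and of $G/X$ (distributing $X$ into $A$ and $B$ according to where the contracted vertex lands) produces the partition of $V(G)$.

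In the remaining case $G$ is connected, co-connected, and prime. Here I would pick a vertex $v$ and try the partition $A=N(v)$, $B=M(v)\cup\{v\}$. One always has $\omega(A)=\omega(N(v))\le\omega(G)-1$, since adding $v$ to any clique in $N(v)$ yields a clique in $G$. Moreover $\omega(B)=\max(1,\omega(M(v)))$, because $v$ is isolated in $G[B]$. Hence the whole problem collapses to finding a vertex $v$ with $\omega(M(v))<\omega(G)$; equivalently, $N[v]$ intersects every maximum clique of $G$.

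The heart of the proof is exhibiting such a $v$, and this is where I expect the main obstacle to lie. I would argue by contradiction: if no such vertex exists, then for every $v$ there is a maximum clique $K_v$ disjoint from $N[v]$. Fix a maximum clique $K$ and some $v\in K$; the clique $K_v$ is then disjoint from $K$ and anticomplete to $v$. Using connectedness of $G$, pick a shortest path from $K_v$ to $v$; $P_5$-freeness forces its length to be exactly $2$, supplying a bridge vertex $w$ with a neighbor in $K_v$ and a neighbor in $N(v)$. Then Lemma~\ref{SEAGULLLEMMA} applied to connected subsets of $K\setminus\{v\}$, of $K_v$, and of the surroundings of $w$ pins down the bipartite adjacency pattern between these pieces; the objective is to locate an induced $P_5$ or $C_5$, contradicting the hypothesis. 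The technical core is ruling out the ``aligned'' configurations in which distinct vertices of $K_v$ share the same set of neighbors in $K$---such a configuration would make $K_v$ act like a homogeneous set, which is where primeness is used to force a forbidden induced subgraph.
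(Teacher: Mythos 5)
Your reductions at the start are mostly sound (and in fact more machinery than needed): reducing to one partition of $G$ itself, handling disconnected $G$ and disconnected $G^c$, and reformulating the goal as ``find a vertex $v$ such that no maximum clique of $G$ lies in $M(v)$'' are all fine, and that target is indeed true and is essentially what the paper establishes. (Two smaller caveats: the homogeneous-set splicing is not automatic, because the quotient $G/X$ can have strictly smaller clique number than $G$, so its inductive partition does not directly control $\omega(G)$-cliques; it can be patched by splitting $X$ itself by a $2$-division when $\omega(X)\ge 2$, but as written it is hand-waved. Also ``the shortest path from $K_v$ to $v$ has length exactly $2$'' needs a short argument using a second vertex of $K_v$, though it is correct.)

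The genuine gap is the heart of the argument, which you explicitly leave as an ``objective'': you never exhibit the induced $P_5$ or $C_5$ that is supposed to contradict the assumption that every closed neighborhood misses some maximum clique. The configuration you set up (a maximum clique $K$ through $v$, a maximum clique $K_v$ anticomplete to $v$, one bridge vertex $w$, plus Lemma~\ref{SEAGULLLEMMA}) does not by itself pin down enough adjacencies, and the proposed rescue via primeness is flawed: vertices of $K_v$ having equal neighborhoods \emph{inside $K$} does not make $K_v$ (or any subset of it) a homogeneous set of $G$, so primeness cannot be invoked to kill the ``aligned'' case. The paper's proof closes exactly this hole with an extremal choice rather than with primeness: fix $v$, note one may assume some component $C_1$ of $M(v)$ has no vertex of $N(v)$ complete to it (otherwise $(M(v)+v,\,N(v))$ already works), observe that two non-adjacent vertices of $N(v)$ cannot have ``crossing'' private neighbors in $M(v)$ (else a $P_5$ or $C_5$ through $v$), and then choose $n\in N(v)$ with a neighbor in $C_1$ maximizing $|N(n)\cap M(v)|$; any $\omega(G)$-clique avoiding $N(n)$ then yields either a $P_5$ via Lemma~\ref{SEAGULLLEMMA} (if it sits inside $C_1$) or a vertex of $N(v)$ whose neighborhood in $M(v)$ strictly contains that of $n$, contradicting the maximality. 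Without some such maximality (or an equivalent mechanism) your contradiction simply is not there, so the proposal as it stands does not prove the theorem.
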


\begin{proof}
Let $G$ be a $(P_5, C_5)$-free graph. We may assume that $G$ is connected.  Let $v \in V(G)$, let $N=N(v), M=M(v)$. Let $C_1, \cdots , C_t$ be the components of $M$. \\

(1) We may assume that there is $i$ such that no vertex of $N$ is complete to $C_i$. \\

For, otherwise, $X_1=M + v, X_2 = N$ is the desired partition. This proves (1). \\

Let $i$ be as in (1), we may assume  that $i=1$. \\

(2) There do not exist $n_1, n_2$ in $N$ and $m_1, m_2$ in $M$ such that $n_1$ is adjacent to $m_1$ and not
to $m_2$, and $n_2$ is adjacent to $m_2$ and not to $m_1$, and $n_1$ is non-adjacent to $n_2$. \\

For, otherwise, $G[\{n_1, n_2, m_1, m_2, v\}]$ is a $P_5$ or a $C_5$. This proves (2). \\

(3)  For every $i>1$ there exists $n \in N$ complete to $C_i$. \\

For suppose that there does not exist $n \in N$ that is complete to $C_2$. For $i = 1, 2$  let $n_i \in N$ have a neighbor in $C_i$.
Since $C_1, C_2$ are connected, by Lemma \ref{SEAGULLLEMMA}, there exist $a_i, b_i \in C_i$ such that $n_i - a_i - b_i$ is a path.
Since $b_1 - a_1 - n_1 - a_2 - b_2$ is not a $P_5$, we deduce that $n_1 \neq n_2$, and therefore
$n_1$ is complete or anticomplete to $C_2$, and $n_2$ is complete or anticomplete to $C_1$. By
the choice of $C_1$ and the assumption, $n_1$ is anticomplete to $C_2$, and $n_2$ to $C_1$. By (2)
$n_1$ is adjacent to $n_2$. But now $b_2 - a_2 - n_2 - n_1 - a_1$ is a $P_5$, a contradiction. This proves
(3). \\


From the set of vertices in $N$ that have a neighbor in $C_1$, choose one that has the maximum number of neighbors in $M$; call it $n$. (Such a vertex exists because $G$ is connected.)
Let $X_1 = N(n)$, and let $X_2 = V(G) \setminus X_1$. Clearly $X_1$ does not contain a clique of size
$w(G)$.  We claim that $\omega(X_2) < \omega(G)$, thus proving that $(X_1, X_2)$ is a partition
certifying 2-divisibility. \\

Suppose that there is a clique $K$ of size $\omega(G)$ in $X_2$. Then $n \not \in X$. By (3), 
$K \setminus (C_2 \cup \cdots \cup C_t) \neq \emptyset$.  \\

(4) $K \not \subseteq C_1$. \\

For suppose that $K \subseteq C_1$. Then $K \subseteq C_1 \setminus N(n)$. Let $D$ be the component of
$C_1 \setminus N(n)$ containing $K$. Then some vertex $p \in N(n) \cap C_1$ has a neighbor in $D$. Since $D$
contains a clique of size $\omega(G)$, $p$ is not complete to $D$. Since $D$ is connected, by Lemma \ref{SEAGULLLEMMA},
there exist $d_1, d_2 \in D$ such that $p - d_1 - d_2$ is a path. But now $d_2 - d_1 - p - n - v$ is a $P_5$, a contradiction. This proves (4). \\

It follows from (4) that $K$ has a vertex $k_1 \in N \setminus X_1$, and a vertex $k_2 \in M \setminus X_1$.  Then
$k_1$ is non-adjacent to $n$, and $k_2$ is non-adjacent to $n$. But now by (2) $N(k_1) \cap M$
strictly contains $N(n) \cap M$, and in particular $k_1$ has a neighbor in $C_1$, contrary to the
choice of $n$. This completes the proof.
\end{proof}

An easy consequence of this is

\begin{corollary}
Let $G$ be a $(P_5, C_5)$-free graph. Then $\chi(G) \leq 2^{\omega(G) - 1}$.
\end{corollary}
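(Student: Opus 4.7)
The plan is to observe that this corollary is an immediate combination of two results already established in the paper, so the proof should be a one-line citation rather than any new argument.

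First I would invoke Theorem~\ref{P5C5THEOREM}, which states that every $(P_5, C_5)$-free graph is $2$-divisible. Applied to the given $G$, this tells us that $G$ is $2$-divisible.

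Next I would apply Lemma~\ref{POWEROFTWOCHIBOUND}, which gives $\chi(H) \leq 2^{\omega(H)-1}$ for any $2$-divisible graph $H$. Setting $H = G$ yields exactly the desired bound $\chi(G) \leq 2^{\omega(G)-1}$.

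There is no real obstacle here: the corollary is purely a composition of the structural statement (Theorem~\ref{P5C5THEOREM}) with the generic chromatic bound derived from $2$-divisibility (Lemma~\ref{POWEROFTWOCHIBOUND}). The only thing to double-check is that Lemma~\ref{POWEROFTWOCHIBOUND} applies to every induced subgraph of $G$, which it does automatically because $2$-divisibility is an induced-hereditary property built into the definition, and because the inductive proof of Lemma~\ref{POWEROFTWOCHIBOUND} only uses $2$-divisibility of $G$ itself together with induction on $\omega$. So the proof will be essentially a single sentence.
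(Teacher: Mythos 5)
Your proposal is correct and matches the paper's proof exactly: the corollary is obtained by combining Theorem~\ref{P5C5THEOREM} with Lemma~\ref{POWEROFTWOCHIBOUND}.
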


\begin{proof} 
Follows from Theorem \ref{P5C5THEOREM} and Lemma \ref{POWEROFTWOCHIBOUND}
\end{proof}

\section{Perfect divisibility in bull-free graphs}

For an induced subgraph $H$ of a graph $G$, a vertex $c \in V(G) \setminus V(H)$ that is complete to $V(H)$ is called a {\em center} for $H$. Similarly, a vertex $a \in V(G) \setminus V(H)$ that is anticomplete to $V(H)$ is called an {\em anticenter} for $H$.  For a hole $C=c_1 - c_2 - c_3 - c_4 - c_5 - c_1$, an {\em $i$-clone} is a vertex adjacent to $c_{i+1}$ and $c_{i-1}$, and not to $c_{i+2}, c_{i-2}$  (in particular $c_i$ is an $i$-clone).  An {\em $i$-star} is a vertex complete to $V(C) \setminus {c_i}$, and non-adjacent to $c_i$. A {\em clone} is  a vertex that is an $i$-clone for some $i$, and a {\em star} is a vertex that is an $i$-star for some $i$. We will need the following results from \cite{CSa} and \cite{CSi}. \\

\begin{theorem} \label{THM1}
(from \cite{CSi}) If $G$ is bull-free, and $G$ has a $P_4$ with a center and an anticenter, then $G$ admits a homogeneous set decomposition, or $G$ contains $C_5$.
\end{theorem}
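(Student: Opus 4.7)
The plan is to assume $G$ is bull-free, has a $P_4$ $p_1p_2p_3p_4$ with center $c$ and anticenter $a$, and contains no $C_5$, and then exhibit a non-trivial homogeneous set in $G$. The key configurations to exploit are the three triangles $\{c, p_i, p_{i+1}\}$ ($i=1,2,3$) together with the anticomplete vertex $a$; bull-freeness and $C_5$-freeness will bite repeatedly on these. A preliminary observation is that any homogeneous set containing $\{p_2, p_3\}$ must also contain $p_1$ and $p_4$, since among $\{p_1, p_2, p_3, p_4, a, c\}$ those are the only two vertices that distinguish $p_2$ from $p_3$; thus the right candidate module will be $P$ itself or $P$ augmented by a few outside vertices.

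Next I would classify every $v \in V(G) \setminus \{p_1, p_2, p_3, p_4, a, c\}$ by the pair $(N(v) \cap P,\, N(v) \cap \{a, c\})$. Two forbidden configurations drive the classification. First, bull-freeness applied to the triangles $\{c, p_i, p_{i+1}\}$ forbids a pair of non-adjacent external vertices $u, v$ that both miss $c$ and attach as pendants to distinct vertices of the triangle; this severely restricts how a vertex can be mixed on $P$ when it misses $c$. Second, $C_5$-freeness applied to 5-cycles through the anticenter $a$ (cycles of the shape $a - u - p_i - p_{i+1} - w - a$ under appropriate adjacencies) excludes further mixed patterns. Combined with a side analysis of whether $ac$ is an edge, these two tools cut the list of admissible adjacency patterns down to a short collection, most of which are complete or anticomplete to $P$.

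Having this restricted list, I would build the module. The aim is to show that the set $M$ obtained from $P$ by adjoining exactly those external vertices whose attachment pattern to $V(G) \setminus M$ mirrors the pattern of $p_1, p_2, p_3$, or $p_4$ is a homogeneous set. Concretely, every remaining external vertex (and in particular $a$ and $c$) must be verified to be either complete or anticomplete to $M$; the restricted pattern list from the classification step makes this verification essentially a finite check. Non-triviality of $M$ is automatic: $a$ is an anticenter and hence lies outside, while $M$ contains the four vertices of $P$.

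The main obstacle I anticipate is the bookkeeping in the classification step: each possible type of $v$ must be checked against a handful of bull and $C_5$ configurations, sometimes involving pairs of mixed external vertices as partners rather than $v$ alone. Closing off every subcase cleanly, without missing a pattern, is where the real work lies; once this is done, identifying the module and verifying homogeneity should be routine.
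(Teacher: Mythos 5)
The paper does not prove Theorem~\ref{THM1} at all: it is imported verbatim from \cite{CSi}, so there is no in-paper argument to compare yours against. Judged on its own, your text is a plan rather than a proof, and the decisive steps are exactly the ones you defer. The classification of external vertices by their attachment to $P\cup\{a,c\}$ is never carried out; you assert that bull-freeness on the triangles $\{c,p_i,p_{i+1}\}$ and $C_5$-freeness through $a$ ``cut the list down to a short collection, most of which are complete or anticomplete to $P$,'' but this is precisely the content of the theorem's hard case analysis, and without it there is no way to check that your candidate module survives. Moreover, the candidate itself is not well defined: you describe $M$ as $P$ together with the external vertices whose attachment pattern to $V(G)\setminus M$ mirrors that of the $p_i$'s, which defines $M$ in terms of its own complement; to make this legitimate you would need an iterative growing procedure plus an argument that the growth terminates and that the final set is still proper. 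Your claim that non-triviality is ``automatic because $a$ lies outside'' presupposes that $a$ never becomes mixed on the grown set, which is not obvious once vertices adjacent to $a$ are adjoined, and $M\neq V(G)$ also needs $c$ (or some vertex) to remain outside or complete to $M$ --- again something to prove, not observe. Your preliminary observation (only $p_1,p_4$ among the six named vertices distinguish $p_2$ from $p_3$) is correct but already signals the problem: arbitrary vertices of $G$ may distinguish $p_2$ from $p_3$ and would have to be swallowed by the module, so there is no a priori reason the homogeneous set is ``$P$ plus a few outside vertices.''

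In short, the skeleton (classify attachments, exploit bull- and $C_5$-freeness, exhibit a module) is a reasonable opening, but the proof lives entirely in the omitted bookkeeping and in a correct, non-circular construction of the homogeneous set with a termination/non-triviality argument. As written, the attempt has a genuine gap and cannot be credited as a proof of Theorem~\ref{THM1}.
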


\begin{theorem} \label{THM2}
 (from \cite{CSa}) If $G$ is bull-free and contains an odd hole or an odd antihole with a center and an anticenter, then $G$ admits a homogeneous set decomposition.
\end{theorem}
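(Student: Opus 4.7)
The first move is to reduce the odd antihole case to the odd hole case. The bull is self-complementary, so $G^c$ is also bull-free; an odd antihole of $G$ is an odd hole of $G^c$, with the roles of center and anticenter interchanged, and homogeneous sets are preserved under complementation. So it suffices to handle the case where $G$ contains an odd hole $C$ with a center $c$ and an anticenter $a$.

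I would then choose $C = c_1 - c_2 - \cdots - c_{2k+1} - c_1$ of minimum length among all odd holes in $G$ admitting both a center and an anticenter. For each vertex $v \in V(G) \setminus (V(C) \cup \{c, a\})$ I would analyze the attachment of $v$ to $V(C) \cup \{c, a\}$ using two main tools. First, bull-freeness applied to the triangles $\{c, c_i, c_{i+1}\}$: if $v$ is adjacent to exactly one vertex of such a triangle, pairing $v$ with $a$ (or with a second external vertex whose neighborhood on the triangle is disjoint from $v$'s) typically forces a bull. Second, minimality of $C$: any short-cutting attachment of $v$ to $C$, together with $c$ or $a$, would produce a shorter odd hole still equipped with both a center and an anticenter, contradicting the choice of $C$. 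Combined, these restrictions should force each external vertex into one of a small number of attachment types (complete to $V(C)$, anticomplete to $V(C)$, or attaching to a single short arc in a controlled way).

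With the attachment types classified, the natural candidate for a homogeneous set is $X = \{u \in V(G) : u \text{ is complete to } V(C)\}$, which contains $c$ and hence is non-empty and does not contain $a$. One would check, type by type, that every vertex of $V(G) \setminus X$ is either complete or anticomplete to $X$. If $X$ turns out to be a singleton $\{c\}$, the fallback is the dual candidate, namely the set of anticenters of $C$ (which contains $a$), or a homogeneous set built inside $V(C)$ from same-type vertices with respect to $C$ together with the external vertices whose attachments match.

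The hardest case, and the main obstacle, should be the shortest-hole case $k = 1$: when $C$ is a $5$-hole, minimality of $C$ buys very little, and the bull-free case analysis must be carried out by hand. For each putative mixed attachment of $v$ to $V(C) \cup \{c, a\}$, one needs to exhibit a triangle (usually of the form $\{c, c_i, c_{i+1}\}$) together with two appropriate pendant vertices, drawn from $\{a, v\}$ and the other hole vertices, producing the forbidden bull. Closing every such subcase, including the awkward configurations in which $v$ attaches partially to $\{c, a\}$ itself and the subcases where two external vertices interact badly, is where most of the work will lie.
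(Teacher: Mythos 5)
This statement is not proved in the paper at all: it is imported verbatim from \cite{CSa}, where its proof is a substantial structural argument, so there is no internal proof to compare against. Judged on its own, your write-up is a plan rather than a proof. The opening reduction is fine (the bull is self-complementary, an odd antihole of $G$ is an odd hole of $G^c$ with center and anticenter swapped, and homogeneous sets are preserved under complementation), but everything after that is asserted in the conditional (``typically forces a bull'', ``should force'', ``one would check''), and the assertions that carry the load are exactly the ones that are not established. Concretely: (i) the minimality argument is weaker than you suggest even for long holes, because a shorter odd hole obtained by re-routing $C$ through an external vertex $v$ need not inherit a center and an anticenter --- $c$ is complete to $V(C)$ but need not be adjacent to $v$, and $a$ need not be non-adjacent to $v$ --- so the contradiction with the choice of $C$ does not come for free; (ii) the candidate $X=\{u:\ u \text{ complete to } V(C)\}$ is never verified to be homogeneous, and it need not be: the theorem only promises that \emph{some} homogeneous set exists somewhere in $G$, and in the actual argument of \cite{CSa} the homogeneous set is built from a much more refined analysis, not from the set of centers or the set of anticenters (either of which may moreover be a singleton, in which case it is not a homogeneous set by definition, and your ``fallback'' candidates have the same unverified status); (iii) the $C_5$ case, which you yourself identify as carrying most of the difficulty --- and which is genuinely needed for the application in this paper, since in the $P_5$-free branch the odd holes handed to this theorem have length $5$ --- is entirely deferred.

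So the proposal identifies reasonable tools (bull-freeness against triangles through the center, complementation, a minimal hole) but does not contain the case analysis or the construction of the homogeneous set; as it stands it is an outline of an attack, with the decisive steps missing, rather than a proof of the cited theorem.
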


\begin{theorem} \label{THM3}
(from \cite{CSa}) If $G$ is bull-free, then either $G$ admits a homogeneous set decomposition, or for every $v \in V(G)$, either $G[N(v)]$ or
$G[M(v)]$ is perfect.
\end{theorem}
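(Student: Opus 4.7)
The plan is to argue by contradiction. Suppose $G$ is bull-free, admits no homogeneous set decomposition, and there is a vertex $v$ such that both $G[N(v)]$ and $G[M(v)]$ are imperfect. By the Strong Perfect Graph Theorem, pick an odd hole or odd antihole $H_1$ in $G[N(v)]$ and another $H_2$ in $G[M(v)]$. Observe that $v$ is automatically a center for $H_1$ and an anticenter for $H_2$.

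I would first use Theorem~\ref{THM2} to constrain the attachment between $H_1$ and $H_2$: since $G$ admits no homogeneous set decomposition, $H_1$ cannot admit an anticenter and $H_2$ cannot admit a center. Specialising to the vertices of the opposite configuration, every $x \in V(H_2)$ must have at least one neighbour in $V(H_1)$, and every $a \in V(H_1)$ must have at least one non-neighbour in $V(H_2)$.

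The technical core is then to show that every $x \in V(H_2)$ is in fact complete to $V(H_1)$. Suppose instead that $x$ is neither complete nor anticomplete to $V(H_1)$. Since $V(H_1)$ is connected in $G$ (both odd holes and odd antiholes on at least five vertices are connected), Lemma~\ref{SEAGULLLEMMA} produces $a, b \in V(H_1)$ with $xa, ab \in E(G)$ and $xb \notin E(G)$. Then $\{v, a, b\}$ is a triangle with $x$ a pendant at $a$, and to produce a bull one needs a second pendant $y$ at $b$ — adjacent to $b$ but non-adjacent to $v, a, x$. One searches for such $y$ first in $V(H_1)$, then in $V(H_2) \setminus \{x\}$, and otherwise one appeals to Theorem~\ref{THM1} applied to a suitable $P_4$ centred at $v$ to either obtain a homogeneous set decomposition immediately or a $C_5$ that already has both a center and an anticenter available, contradicting Theorem~\ref{THM2}. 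This forces every $x \in V(H_2)$ to be complete to $V(H_1)$; symmetrically, every $a \in V(H_1)$ is anticomplete to $V(H_2)$ — an immediate contradiction since $V(H_1), V(H_2)$ are both nonempty.

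The main obstacle is exactly this last case split: the parameters (whether each of $H_1, H_2$ is an odd hole or odd antihole, their lengths, and the precise pattern of $x$'s neighbourhood in $H_1$) multiply the sub-cases considerably, and finding the second bull-pendant is not uniform across them. I expect the cleanest route is to apply Theorem~\ref{THM1} repeatedly to collapse the setting to the case $H_1, H_2 \cong C_5$ and then finish with an explicit bull-extraction or a direct appeal to Theorem~\ref{THM2}.
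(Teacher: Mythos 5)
This statement is quoted by the paper from \cite{CSa}; the paper contains no proof of it at all, so there is nothing internal to compare your argument against --- it has to stand on its own as a reproof of the Chudnovsky--Safra theorem. Your opening moves are sound and are the natural ones: take $H_1$ in $N(v)$ and $H_2$ in $M(v)$ via the strong perfect graph theorem, note that $v$ is a center for $H_1$ and an anticenter for $H_2$, and use Theorem~\ref{THM2} to conclude that $H_1$ has no anticenter and $H_2$ has no center, so every vertex of $H_2$ has a neighbour in $V(H_1)$ and every vertex of $H_1$ has a non-neighbour in $V(H_2)$. If you could really show that every $x\in V(H_2)$ is complete to $V(H_1)$, the contradiction you describe would indeed follow (each vertex of $H_1$ would then be a center for $H_2$).

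The genuine gap is that this ``technical core'' is not proved; it is only announced. Having found $a,b\in V(H_1)$ with $x\hbox{-}a\hbox{-}b$ a path, the second pendant $y$ of your intended bull must be non-adjacent to $v$, hence lies in $M(v)$; so ``searching for $y$ first in $V(H_1)$'' is vacuous, since $V(H_1)\subseteq N(v)$, and searching in $V(H_2)\setminus\{x\}$ can fail (nothing forces a vertex of $H_2$ adjacent to $b$ but not to $a$, $x$). The fallback is equally unsupported: Theorem~\ref{THM1} needs a $P_4$ with \emph{both} a center and an anticenter, and you never exhibit one ($v$ is a center for paths inside $N(v)$, but an anticenter must come from $M(v)$ and has to miss all four path vertices, including $x$); moreover its conclusion is only ``homogeneous set or $C_5$'', and a $C_5$ is not a contradiction unless you also produce a center and an anticenter for that particular $C_5$ to feed into Theorem~\ref{THM2} --- which again is asserted, not shown. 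Since the whole case analysis over hole/antihole types and attachment patterns is exactly where the work of \cite{CSa} lies, the proposal as written is an outline with the decisive step missing, not a proof.
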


The next two theorems refine Theorem~\ref{THM3} in the special cases we are dealing with in this paper.
\begin{theorem} \label{THM4}
If $G$ is bull-free and odd-hole-free, then either $G$ admits a homogeneous set decomposition, or for every $v \in V(G)$ the graph $G[M(v)]$ is perfect.
\end{theorem}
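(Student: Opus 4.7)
The plan is to proceed by contradiction: suppose $G$ is bull-free and odd-hole-free, admits no homogeneous set decomposition, yet $G[M(v)]$ fails to be perfect for some $v \in V(G)$. By the Strong Perfect Graph Theorem, $G[M(v)]$ then contains an odd hole or an odd antihole; the odd-hole-free hypothesis kills the first possibility, and since $C_5$ is itself an odd hole, any odd antihole in $G[M(v)]$ must have length $k\ge 7$. Call this odd antihole $A$.

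The goal is then to exhibit a $P_4$ in $G$ equipped with both a center and an anticenter, so that Theorem~\ref{THM1} forces either a homogeneous set decomposition (contradicting the standing assumption) or a $C_5$ (contradicting odd-hole-freeness). Label the vertices of $A$ as $c_1,\ldots,c_k$ around the antihole, so that $c_ic_j$ is a non-edge of $G$ iff $|i-j|\in\{1,k-1\}$. First I would check that $\{c_1,c_2,c_3,c_4\}$ induces the path $P\colon c_3 - c_1 - c_4 - c_2$ (the only edges among these four vertices are $c_1c_3$, $c_2c_4$, and $c_1c_4$). Next, since $k\ge 7$, the vertex $c_6$ lies in $A\subseteq M(v)$ and the differences $|6-j|$ for $j\in\{1,2,3,4\}$ are $5,4,3,2$, none of which equals $1$ or $k-1$; hence $c_6$ is adjacent to all four vertices of $P$ and so is a center for $P$. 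Because $A\subseteq M(v)$, the vertex $v$ is anticomplete to $V(P)$ and thus an anticenter for $P$. Theorem~\ref{THM1} now delivers the desired contradiction.

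The hard part is really only the combinatorial observation that every odd antihole of length at least $7$ contains an induced $P_4$ whose center can be chosen from within the antihole itself; once this is in hand, Theorem~\ref{THM1}, together with the bull-free and odd-hole-free hypotheses, closes the proof. I do not anticipate any further obstacle.
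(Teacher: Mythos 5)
Your proposal is correct and follows essentially the same route as the paper: apply the Strong Perfect Graph Theorem to get an odd antihole of length at least $7$ in $G[M(v)]$, extract from it a $P_4$ with a center inside the antihole, note that $v$ is an anticenter, and invoke Theorem~\ref{THM1}, with the $C_5$ outcome excluded by odd-hole-freeness. Your explicit verification that $c_3-c_1-c_4-c_2$ is an induced $P_4$ with $c_6$ as a center is exactly the detail the paper leaves implicit.
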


\begin{proof}
We may assume that $G$ does not admit a homogeneous set  decomposition. Let $v \in V(G)$ such that $G[M(v)]$ is not perfect. Since $G$ is odd-hole-free, by the strong perfect graph theorem \cite{CRST}, $G[M(v)]$ contains an odd antihole of length at least 7, and therefore a three-edge-path $P$ with a center.  Now $v$ is an anticenter for $P$, and so by Theorem \ref{THM1}, $G$ admits a homogeneous set decomposition, a contradiction. This proves the theorem.
\end{proof}

\begin{theorem} \label{THM5}
If $G$ is bull-free and $P_5$-free, then either $G$ admits a homogeneous set decomposition, or for some $v \in V(G)$, $G[M(v)]$ is perfect.
\end{theorem}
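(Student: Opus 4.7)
The plan is to argue by contradiction: suppose $G$ is bull-free, $P_5$-free and admits no homogeneous set decomposition, yet $G[M(v)]$ is imperfect for every $v\in V(G)$. By Theorem~\ref{THM3} this forces $G[N(v)]$ to be perfect for every $v$, so no vertex of $G$ is a center for any induced $C_5$ or odd antihole of $G$.

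Each $G[M(v)]$ is then $P_5$-free and imperfect, so the Strong Perfect Graph Theorem gives an induced odd hole or odd antihole in it. The only odd hole that embeds into a $P_5$-free graph is $C_5$, since $C_n$ with $n\ge 6$ contains an induced $P_5$; whereas every odd antihole $\overline{C_n}$ with $n\ge 5$ is $P_5$-free (among any five vertices of $\overline{C_n}$ there are at most four non-edges of the antihole, while an induced $P_5$ would need six). I would next show that in both sub-cases $G$ itself contains an induced $C_5$. This is immediate if $G[M(v)]$ contains $C_5$. If instead it contains $\overline{C_n}$ with odd $n\ge 7$, any vertex $a$ of the antihole has $n-3\ge 4$ antihole-neighbors inducing $\overline{P_{n-3}}$, which contains an induced $P_4$; so $a$ is a center and $v$ an anticenter for that $P_4$, and Theorem~\ref{THM1} combined with the absence of a homogeneous decomposition yields an induced $C_5$ in $G$.

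Fix such an induced $C=c_1{-}c_2{-}c_3{-}c_4{-}c_5{-}c_1$. Theorem~\ref{THM2} with no HSD rules out centers of $C$; $P_5$-freeness eliminates the attachment $|N(x)\cap V(C)|=1$ for $x\notin V(C)$ (the unique neighbor $c_i$ extends along $C$ to an induced $P_5$); and bull-freeness eliminates $x$ adjacent to exactly two consecutive $c_i$'s. Every $x\notin V(C)$ is therefore an anticenter, an $i$-clone, a vertex adjacent to a non-consecutive triple of $C$, or an $i$-star. The main obstacle, and the real content of the argument, is to exploit this classification to exhibit a distinguished vertex $v^{\star}$ for which $G[M(v^{\star})]$ is perfect. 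My intended plan is to take $v^{\star}$ to be an $i$-star when one exists (so that four vertices of $V(C)$ lie in $N(v^{\star})$) and an appropriately chosen vertex of $C$ otherwise, and to argue that any $C_5$ or $\overline{C_n}$ inside $M(v^{\star})$ would, together with $v^{\star}$ and well-chosen vertices of $V(C)$, produce an induced $P_5$, an induced bull, a center for some $C_5$ (forbidden by Theorem~\ref{THM2}), or a $P_4$ with both a center and an anticenter whose resolution via Theorem~\ref{THM1} creates a further homogeneous set. The careful bookkeeping of how clones, stars and triple-attachments interact under the combination of $P_5$-freeness and bull-freeness is where the technical work of this last step lies.
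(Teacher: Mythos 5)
Your reduction to the case where $G$ contains an induced $C_5$ is sound (and close in spirit to the paper, which gets the $C_5$ at once from Theorem~\ref{THM4}, since a $P_5$-free graph with no $C_5$ is odd-hole-free), and your use of Theorem~\ref{THM3} to exclude centers is legitimate. But the proof stops exactly where the real difficulty begins: you never actually exhibit a vertex $v^{\star}$ and prove that $G[M(v^{\star})]$ is perfect; you only state an ``intended plan'' and concede that ``the careful bookkeeping\dots is where the technical work of this last step lies.'' That missing bookkeeping is the bulk of the paper's argument. There, one analyzes pairs of non-adjacent $i$-clones of $C$ and vertices mixed on such pairs, shows such a mixed vertex must be a center, an $i$-star, or an $i$-clone, then takes maximal anticonnected sets of $i$-clones and uses the hypothesis that $G$ has \emph{no} homogeneous set (via Lemma~\ref{SEAGULLLEMMA} applied in $G^c$) to rule out having, say, both a $1$-clone non-adjacent to $c_1$ and a $3$-clone non-adjacent to $c_3$. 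This yields the key domination statement that for some $i$, $V(G)=N[c_i]\cup N[c_{i+2}]$, after which any odd hole or odd antihole in $M(c_i)$ has $c_{i+2}$ as a center and $c_i$ as an anticenter, contradicting Theorem~\ref{THM2}. Nothing in your plan substitutes for this: choosing an $i$-star (or ``an appropriately chosen vertex of $C$'') gives no reason why $M(v^{\star})$ should avoid a $C_5$ or odd antihole built from clones and other attachments far from your chosen vertex, and the existence of the right hole vertex is precisely what the clone/homogeneous-set analysis is needed to establish. The no-homogeneous-set hypothesis must be used a second time, beyond excluding centers/anticenters, and your sketch never invokes it in that role.

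Two smaller points. Your classification of attachments to $C$ retains the case of a vertex adjacent to a non-consecutive triple of $C$; this case is in fact impossible (if $x$ is adjacent to $c_1,c_2,c_4$ only, then $\{c_5,c_1,c_2,c_3,x\}$ induces a bull), and the correct statement, as in the paper's step (1), is that every vertex outside a $5$-hole is a clone, a star, a center, or an anticenter. Also, you leave anticenters of $C$ unexcluded; the paper's step (2) eliminates them with a separate connectivity-plus-bull argument, and without that (or the domination statement) your endgame cannot even begin, since $M$ of any vertex of $C$ would a priori contain all anticenters.
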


\begin{proof}
By Theorem \ref{THM4} we may assume that $G$ contains a $C_5$, say $C=c_1 - c_2 - c_3 - c_4 - c_5 - c_1$. We may assume that $G$ does not admit a homogeneous set decomposition. \\

(1) Let $D$ be a hole of length $5$, and let $v \notin V(D)$. Then $v$ is a clone, a star, a center or an anticenter for $D$. \\

Since $G$ has no $P_5$, $v$ cannot have exactly one neighbor in $D$.  Suppose that $v$ has exactly two neighbors in $D$. Since $G$ is bull-free, the neighbors are non-adjacent, so $v$ is a clone. Suppose that $v$ has exactly two non-neighbors in $D$. Since $G$ is bull-free, the non-neighbors are adjacent, and $v$ is a clone. The cases when $v$ has $0$, $4$, $5$ neighbors in $D$ result in $v$ being an anticenter, star, and a center for $D$, respectively.  This proves (1). \\

(2) Let $D$ be a hole of length $5$ in $G$. Then there is no anticenter for $D$. \\

Suppose that $v$ is an anticenter for $D$, we may assume that $D=C$. By Theorem \ref{THM3} there is no center for $D$. Since $G$ is connected, we may assume that $v$ has a neighbor $u$ such that $u$ has a neighbor in $V(D)$. Let $P$ be a path starting at $u$ and with $V(P) \setminus {u} \subseteq V(D)$ with $|V(P)|$ maximum. Since $v-u-P$ is not a $P_5$, and $v$ is not a center for $P$, it follows that for some $i$, $v$ is adjacent to $c_i$ and to $c_{i+1}$, but not to $c_{i+2}$. But now $G[\{c_i, c_{i+1}, c_{i+2}, u,v\}]$ is a bull,  a contradiction. This proves (2). \\

(3) Let $d_i$ and $d_i'$  be $i$-clones non-adjacent to each other. Let $v$ be adjacent to $d_i$ and not to $d_i'$. Then $v$ is a center for $C$, or $v$ is an $i$-star for $C$,  or $v$ is an $i$-clone for $C$.  Moreover, let  $D$ be the hole obtained from $C$ by replacing $c_i$ with $d_i$, and let 
$D'$ be the hole obtained from $C$ by replacing $c_i$ with $d_i'$.  
It follows that  either 
\begin{itemize}
\item $v$ is an $i$-clone for both $D$ and $D'$, or 
\item $v$ is a center for $D$, and an $i$-star for $D'$.  
\end{itemize}

We may assume that $i=1$. 
If $v$ is anticomplete to $\{c_2,c_5\}$, then we get a contradiction to  (1) or (2) applied to $v$ and $D'$.  Thus we may assume that $v$ is adjacent to $c_2$. Suppose that $v$ is non-adjacent to $c_5$. By (1) applied to $D$, $v$ is adjacent to $c_3$. But now $d_1'-c_5-d_1-v-c_3$ is a $P_5$, a contradiction.  Thus  $v$ is adjacent to $c_5$. By (1) applied to $D'$, $v$ is either complete or anticomplete to $\{c_3, c_4\}$. Now if $v$ is anticomplete to $\{c_3,c_4\}$, then $v$ is an $i$-clone; if $v$ is complete to $\{c_3, c_4\}$ then $v$ is a center or an $i$-star for $C$. This proves (3). \\

(4) There do not exist $d_1, d_1', d_3, d_3', v_1, v_3$ such that 

\begin{itemize}
\item $\{d_1, d_1'\}$ is not complete to $\{d_3, d_3'\}$, and 
\item for $i={1,3}$
\begin{itemize}
\item $d_i$ and $d_i'$  are $i$-clones non-adjacent to each other, and
\item $v_i$ is adjacent to $d_i$ and non-adjacent to to $d_i'$, and 
\item $v_i$ is not an $i$-clone.  
\end{itemize}
\end{itemize}

Observe that by (3), no vertex of $\{d_1,d_1'\}$ is mixed on $\{d_3,d_3'\}$ and the same with the roles of $1,3$ exchanged. It follows that $\{d_1,d_1'\}$ is anticomplete to $\{d_3,d_3'\}$, and in particular $v_1, v_3 \not \in \{d_1,d_1',d_3,d_3'\}$. By (3) applied to the hole $d_1'-c_2-c_3-c_4-c_5-d_1'$ and ${d_3,d_3'}$, it follows that $v_3$ is complete to $\{d_1,d_1'\}$. Similarly $v_1$ is complete to $\{d_3,d_3'\}$. In particular $v_1 \neq v_3$. But now $G[\{d_1', v_3, d_1, v_1, d_3'\}]$ is either a bull or a $P_5$, in both cases a contradiction. This proves (4). \\

(5) There is not both a $1$-clone non-adjacent to $c_1$, and a $3$-clone non-adjacent to $c_3$. \\

For suppose that such clones exist. For $i=1,3$ let $X_i$ be a maximal anticonnected set of $i$-clones with $c_i$ in $X_i$. Then $|X_i| > 1$ for $i=1,3$. 
Since $X_i$ is anticonnected, it follows from
(3) that $X_1$ is anticomplete to $X_3$. Since $|X_1|, |X_3| > 1$, and $G$ does not admit a homogeneous set decomposition, it follows 
that neither $X_1$ nor  $X_3$ is a homogeneous set in $G$. Therefore for $i = 1,3$ there 
exists $v_i \not \in X_i$ with a neighbor and a non-neighbor in $X_i$. Then $v_i  
\not \in X_1 \cup X_3$. Note that $X_i + v_i$ is anticonnected, and hence by the maximality of $X_i$, it follows that $v_i$ is not an 
$i$-clone.  By applying Lemma \ref{SEAGULLLEMMA} in $G^c$ with $v_i$ and $X_i$ for $i =1,3$,  it follows that there exist $d_i, d_i' \in 
X_i$ such that  $d_i$ is non-adjacent to $d_i'$, $v_i$ is adjacent to $d_i$, and  $v_i$ is non-adjacent to  
$d_i'$. But now we get a contradiction to (4). This proves (5).  \\

(6) For some $i$, $V(G)=N[c_i] \cup N[c_{i+2}]$ (here addition is {\em mod 5}). \\

Suppose that (6) is false. 
Since (6) does not hold with $i=1$, (1), (2)  and symmetry  imply that we may assume that there is a $1$-clone $c_1'$ non-adjacent to $c_1$. 
Since  (6) does not hold with $i=5$, again by (1), (2) and symmetry we may assume that there is a $2$-clone $c_2'$ non-adjacent to $c_2$. 
Finally, since  (6) does not hold with $i=3$, by (1), (2) and symmetry we get  a $3$-clone $c_3'$ non-adjacent to $c_3$. But this is a contradiction to (5). This proves (6). \\

Let $i$ be as in (6); we may assume that $i=1$. Suppose that $G[M(c_1)]$ is not perfect. Then, by the strong perfect graph theorem \cite{CRST}, $G[M(c_1)]$ contains an odd hole or an odd antihole $H$. But now $c_3$ is a center for $H$, and $c_1$ is an anticenter for $H$, contrary to Theorem \ref{THM2}. This proves the theorem.
\end{proof}

A graph $G$ is {\em perfectly weight divisible} if for every non-negative integer weight function $w$ on $V(G)$,  there is a partition of $V(G)$ into two sets $P,W$ such that $G[P]$ is perfect and the maximum weight of a clique in $G[W]$ is smaller than the maximum weight of a clique in $G$. \\

\begin{theorem} \label{THM6}
A minimal non-perfectly weight divisible graph does not admit a homogeneous set decomposition.
\end{theorem}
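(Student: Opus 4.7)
The plan is to argue by contradiction: assume $G$ is a minimum counterexample that admits a homogeneous set decomposition, and let $H$ be a homogeneous set in $G$. Since $1 < |H| < |V(G)|$, every proper induced subgraph of $G$ is perfectly weight divisible; in particular both $G[H]$ and the ``contracted'' graph $G' := G \setminus (H \setminus \{h\})$ are, for any fixed $h \in H$. Fix a weight function $w$ on $V(G)$; we must produce a partition of $V(G)$ that certifies perfect weight divisibility with respect to $w$.

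First I would push the weight of $H$ onto the representative $h$: define $w'$ on $V(G')$ by $w'(v) := w(v)$ for $v \ne h$ and $w'(h) := \omega_w(G[H])$. Using homogeneity, every clique in $G$ either avoids $H$ (and so is a clique in $G'$ of the same $w$-weight) or meets $H$ in a subclique of weight at most $w'(h)$ whose off-$H$ part is complete to all of $H$, hence to $h$; conversely any clique in $G'$ using $h$ can be expanded back into $G$ by inserting a maximum $w$-weight clique of $G[H]$. Hence $\omega_{w'}(G') = \omega_w(G)$.

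Now apply perfect weight divisibility of $(G', w')$ to obtain a partition $(P', W')$ with $G'[P']$ perfect and $\omega_{w'}(G'[W']) < \omega_{w'}(G')$, and split on where $h$ lies. If $h \in W'$, set $P := P'$ and $W := V(G) \setminus P'$: then $G[P] = G'[P']$ is perfect, and any clique $K \subseteq W$ decomposes as $K = K_1 \cup K_2$ with $K_1 \subseteq W' \setminus \{h\}$ and $K_2 \subseteq H$; if $K_2 \ne \emptyset$ then by homogeneity $K_1$ is complete to $h$, so $K_1 \cup \{h\}$ is a clique in $G'[W']$ of $w'$-weight at least $w(K)$, and if $K_2 = \emptyset$ then $K$ is already a clique in $G'[W']$ — either way $w(K) < \omega_w(G)$. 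If instead $h \in P'$, use perfect weight divisibility of $(G[H], w|_H)$ to produce $(P_H, W_H)$ with $G[P_H]$ perfect and $\omega_w(G[W_H]) < \omega_w(G[H])$, and set $P := (P' \setminus \{h\}) \cup P_H$ and $W := W' \cup W_H$. Because $H$ is homogeneous, $G[P]$ is obtained from the perfect graph $G'[P']$ by substituting the perfect graph $G[P_H]$ in place of $h$, so $G[P]$ is perfect by Lov\'asz's substitution lemma; the clique-weight analysis of $G[W]$ parallels the previous case, now using the strict inequality $\omega_w(G[W_H]) < \omega_w(G[H]) = w'(h)$ to finish.

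The crux is the case $h \in P'$, where I must expand the single representative $h$ back into a perfect induced subgraph of $G[H]$ without destroying the strict weight drop on the $W$ side. This is precisely what the slack between $\omega_w(G[W_H])$ and $\omega_w(G[H]) = w'(h)$ buys, and the perfection of the merged $P$ side comes for free from the classical substitution theorem. The remainder is routine bookkeeping around the homogeneous set.
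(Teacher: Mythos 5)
Your proposal is correct and follows essentially the same route as the paper's proof: contract the homogeneous set to a single representative carrying weight $\omega_w(G[H])$, invoke minimality on the contracted graph and on $G[H]$, split on whether the representative lands in the perfect or the reduced-weight side, use Lov\'asz's substitution theorem for perfection, and exploit the slack $\omega_w(G[W_H])<\omega_w(G[H])$ for the clique-weight drop. The only cosmetic difference is that you make the identity $\omega_{w'}(G')=\omega_w(G)$ explicit, which the paper leaves implicit.
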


\begin{proof} Let $G$ be such that all proper induced subgraphs of $G$ are perfectly weight divisible. Let $w$ be a weight function on $V(G)$. Let $X$ be a homogeneous set in $G$, with common neighbors $N$ and let $M = V(G) \setminus (X \cup N)$. Let $G'$ be obtained from $G$ by replacing $X$ with a single vertex $x$ of $X$ with weight $w(x)$ equal to the maximum weight of a clique in $G[X]$. Let $T $ be the maximum weight of a clique in $G$. \\

Let $(P',W')$ be a partition of $V(G')$ corresponding to the weight $w$. Let $(X_p, X_w)$ be a partition of $X$ where $G[X_p]$ is perfect and the maximum weight of a clique in $G[X_w]$ is smaller than the maximum weight of a clique in $G[X]$. We construct a partition of $V(G)$. \\

Suppose first that $x \in W'$. Then let $P=P'$ and $W=W' \cup X$. Clearly this is a good partition. Now suppose that $x \in P'$. Let $P = (P'  \setminus x)  \cup X_p$ and let $W=W' \cup X_w$. By a theorem of \cite{LL}, $G[P]$ is perfect.
Suppose that $W$ contains a clique $K$ with weight  $T$. Then $K \cap X_w$ is non-empty. Let $K'$ be a clique of maximum weight in $X$. Now $(K \setminus X_w)  \cup K'$ is a clique in $G$ with weight  greater than $T$,  a contradiction. This proves the theorem.
\end{proof}

We can now prove our main result:

\begin{theorem}\label{MAINTHEOREM}
Let $G$ be a bull-free graph that is either odd-hole-free or $P_5$-free.
Then $G$ is perfectly weight divisible, and hence perfectly divisible.
\end{theorem}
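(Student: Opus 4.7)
I will prove the statement by induction on $|V(G)|$. The weaker ``perfectly divisible'' conclusion follows from the weighted statement by specializing to $w \equiv 1$, so I focus on perfect weight divisibility.

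Let $G$ be a minimal counterexample, let $w$ be a non-negative integer weight witnessing the failure, and let $T$ denote the maximum weight of a clique in $G$. By Theorem~\ref{THM6}, $G$ admits no homogeneous set decomposition, so the refined structural theorems apply: either Theorem~\ref{THM4} (in the odd-hole-free branch) or Theorem~\ref{THM5} (in the $P_5$-free branch) produces a vertex $v \in V(G)$ for which $G[M(v)]$ is perfect. This vertex is the hinge of the argument.

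I then split on the value of $\Omega$, the maximum weight of a clique in $G[N(v)]$. In the easy case $\Omega < T$, I take $P = \{v\} \cup M(v)$ and $W = N(v)$. Since $v$ is anticomplete to $M(v)$, $G[P]$ is a disjoint union of a single vertex and the perfect graph $G[M(v)]$, hence itself perfect, and the maximum clique weight in $G[W]$ equals $\Omega < T$, giving the desired partition. In the remaining case $\Omega = T$, the inequality $w(v) + \Omega \le T$, witnessed by $\{v\} \cup K$ for $K$ a maximum weight clique in $N(v)$, forces $w(v) = 0$. I then apply the inductive hypothesis to the proper induced subgraph $G \setminus v$, which inherits the hypotheses and still has maximum clique weight $T$ (since $N(v) \subseteq V(G)\setminus\{v\}$), to obtain a partition $(P_0, W_0)$ of $V(G)\setminus\{v\}$ with $G[P_0]$ perfect and maximum clique weight in $G[W_0]$ strictly less than $T$. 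Setting $P = P_0$ and $W = W_0 \cup \{v\}$ works: $G[P]$ is perfect, and since $v$ has weight zero, adjoining it cannot raise the maximum clique weight in $W$.

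The main obstacle is the second case: here $v$ lies in a maximum weight clique, so one cannot directly cast $v$ into the perfect side using $G[M(v)]$. The saving observation is that $\Omega = T$ automatically forces $w(v)=0$ via the elementary clique inequality above, after which $v$ is absorbed harmlessly into $W$ and the inductive hypothesis closes the argument. The remaining verifications in each case — perfectness of $G[P]$ and the strict clique-weight bound on $G[W]$ — are then routine.
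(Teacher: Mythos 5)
Your proof is correct, but it handles the crucial point --- what to do when the vertex supplied by the structure theorems does not immediately force a drop in clique weight --- differently from the paper. The paper deals with zero-weight vertices up front: it passes to $G'=G[U]$, where $U$ is the set of vertices of strictly positive weight, applies Theorems~\ref{THM4}, \ref{THM5} and \ref{THM6} to $G'$ to get $v\in U$ with $G'[M_{G'}(v)]$ perfect, and takes the one-shot partition $P=M_{G'}(v)+v$, $W=N_{G'}(v)\cup(V(G)\setminus U)$; the strict decrease in maximum clique weight on $W$ comes from $w(v)>0$, since any clique of $G[W]$ has the same weight as its intersection with $N_{G'}(v)$ and could be enlarged by $v$. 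You instead apply the structure theorems to $G$ itself (legitimate, since a minimal counterexample is a minimal non-perfectly-weight-divisible graph, so Theorem~\ref{THM6} applies) and split on $\Omega$, the maximum clique weight of $G[N(v)]$: if $\Omega<T$ the partition $(\{v\}\cup M(v),\,N(v))$ finishes at once, and if $\Omega=T$ the clique $\{v\}\cup K$ forces $w(v)=0$, after which you delete $v$, invoke minimality on $G\setminus v$ (whose maximum clique weight is still $T$), and return $v$ to the $W$-side harmlessly. The two routes rest on exactly the same structural input. The paper's construction avoids any case analysis and any further appeal to induction beyond Theorem~\ref{THM6}, but it needs the structure theorems to hold for $G[U]$, which strictly speaking requires observing that one may assume $U=V(G)$ (otherwise one lifts a partition of the perfectly weight divisible graph $G[U]$); your version never discards zero-weight vertices and so sidesteps that point, at the cost of one extra use of the inductive hypothesis. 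Both arguments share the paper's implicit convention about the degenerate weight function $w\equiv 0$, so that is not a gap relative to the paper's standard.
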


\begin{proof}
Let $G$ be a minimal counterexample to the theorem. Then there is a
non-negative integer weight function $w$ on $V(G)$ for which 
there is no partition of $V(G)$ as in the definition of being perfectly 
weight divisible. Let $U$ be the set of vertices of $G$ with $w(v)>0$,
and let $G'=G[U]$.  
By theorems \ref{THM4}, \ref{THM5}, \ref{THM6}, $G'$ has a vertex $v$ such that $G'[M_{G'}(v)]$ is perfect. But now, since $w(v) >0$,  setting 
$P=M_{G'}(v)+v$ and 
$W=N_{G'}(v) \cup (V(G) \setminus U)$ we get a partition of $V(G)$ as
in the definition of being perfectly weight divisible, a contradiction.
This proves the theorem.
\end{proof}

\begin{corollary}
Let $G$ be a bull-free graph that is either odd-hole-free or $P_5$-free. Then $ \chi(G) \leq {\omega(G) + 1 \choose 2}$. 
\end{corollary}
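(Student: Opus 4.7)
The plan is to combine the two results already established in the paper. By Theorem~\ref{MAINTHEOREM}, the hypothesis that $G$ is bull-free and either odd-hole-free or $P_5$-free implies that $G$ is perfectly divisible (in fact, perfectly weight divisible, but we need only ordinary perfect divisibility here). Then Lemma~\ref{QUADRATICCHIBOUND} states that every perfectly divisible graph $H$ satisfies $\chi(H) \leq \binom{\omega(H)+1}{2}$, and applying it to $G$ yields the desired bound in a single step.

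There is essentially no obstacle to overcome: the substantive work — establishing perfect divisibility under the stated hypotheses — has already been carried out in Theorem~\ref{MAINTHEOREM}, whose proof in turn relies on the structural refinements of Theorem~\ref{THM3} given by Theorems~\ref{THM4} and~\ref{THM5}, together with the fact that a minimal non-perfectly-weight-divisible graph admits no homogeneous set decomposition (Theorem~\ref{THM6}). The chromatic bound itself is a clean corollary of perfect divisibility via the elementary induction on $\omega(G)$ appearing in Lemma~\ref{QUADRATICCHIBOUND}, which splits $\chi$ as $\chi(X)+\chi(G\setminus X) \leq \omega + \binom{\omega}{2} = \binom{\omega+1}{2}$. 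Thus the proof will consist of a one-sentence invocation of Theorem~\ref{MAINTHEOREM} followed by Lemma~\ref{QUADRATICCHIBOUND}.
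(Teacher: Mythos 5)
Your proposal is correct and is exactly the paper's argument: the corollary follows by invoking Theorem~\ref{MAINTHEOREM} to get perfect divisibility and then applying Lemma~\ref{QUADRATICCHIBOUND}.
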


\begin{proof}
Follows from Theorem \ref{MAINTHEOREM} and Lemma \ref{QUADRATICCHIBOUND}.
\end{proof}

\section{Acknowledgment}
This research was performed during the 2017 Barbados Graph Theory Workshop at the McGill University
Bellairs Research Institute in Barbados, and the authors are grateful to the institute for
its facilities and hospitality. The authors also thank Ch\'{i}nh T. Ho\`{a}ng for telling them
about these problems, and for many useful discussions.

\end{document}